\definecolor{refkey}{rgb}{0.6, 0.7, 0.4}
\definecolor{labelkey}{rgb}{0, 0.7, 0.5}
\newtheorem{theorem}{Theorem}[section]
\newtheorem{proposition}[theorem]{Proposition}
\newtheorem{corollary}[theorem]{Corollary}
\theoremstyle{definition}
\newtheorem{definition}[theorem]{Definition}
\newtheorem{question}[theorem]{Question}
\theoremstyle{remark}
\numberwithin{equation}{section}
\newcommand{\II}{\mathfrak{I}}
\DeclareMathOperator{\Bd}{Bd}
\DeclareMathOperator{\Cyl}{Cyl}
\title[Simple homotopy and contractible contractions]{Simple homotopy of flag simplicial complexes and contractible contractions of graphs}
\author[A. Dochtermann]{Anton Dochtermann}
\address{Department of Mathematics, Texas State University, San Marcos, TX 78666, USA}
\email{dochtermann@txstate.edu}
\author[T. Matsushita]{Takahiro Matsushita}
\address{Department of Mathematical Sciences, Faculty of Science, Shinshu University, Matsumoto, Nagano 390-8621, Japan}
\email{matsushita@shinshu-u.ac.jp}
\keywords{clique complex, flag complex, simple homotopy}
\subjclass[2020]{57Q10; 05E45}
\begin{document}
\maketitle

\begin{abstract}
In his work on molecular spaces, Ivashchenko introduced the notion of an $\II$-contractible transformation on a graph $G$, a family of addition/deletion operations on its vertices and edges. Chen, Yau, and Yeh used these operations to define the $\II$-homotopy type of a graph, and showed that $\II$-contractible transformations preserve the simple homotopy type of $C(G)$, the clique complex of $G$. In other work, Boulet, Fieux, and Jouve introduced the notion of $s$-homotopy of graphs to characterize the simple homotopy type of a flag simplicial complex. They proved that $s$-homotopy preserves $\II$-homotopy, and asked whether the converse holds. In this note, we answer their question in the affirmative, concluding that graphs $G$ and $H$ are $\II$-homotopy equivalent if and only if $C(G)$ and $C(H)$ are simple homotopy equivalent. We also show that a finite graph $G$ is $\II$-contractible if and only if $C(G)$ is contractible, which answers a question posed by the first author, Espinoza, Fr\'ias-Armenta, and Hern\'andez. We use these ideas to give a characterization of simple homotopy for arbitrary simplicial complexes in terms of links of vertices.
\end{abstract}

\section{Introduction}\label{sec:intro}

Simple homotopy equivalence is a combinatorial refinement of homotopy equivalence based on elementary collapses and expansions of cell complexes. We recall relevant definitions in the next section, and refer to the textbook \cite{Cohen} for a complete treatment. A simplicial complex $X$ is \emph{flag} if any minimal non-face has cardinality at most two. Such a complex is determined by its $1$-skeleton (the subcomplex consisting of 0-dimensional and 1-dimensional faces), and thus can be identified with a simple graph $G = G(X)$.  On the other hand, if $G$ is any finite simple graph one may construct its \emph{clique complex} $C(G)$, the flag simplicial complex whose faces are the complete subgraphs of $G$. 
Flag complexes are particularly efficient to encode and are well-suited for calculations in topological data analysis and other applied settings.
With this in mind it is of interest to understand how simple homotopy of flag simplicial complexes can be understood in graph theoretic terms.

In this note we study how simple homotopy equivalence of flag complexes relates to the \emph{$\II$-homotopy} type of a graph, a notion studied by Chen, Yau, Yeh \cite{CYY}. The theory is based on the \emph{$\II$-contractible transformations} of Ivashchenko \cite{Ivashchenko1}, who introduced these operations on graphs in his study of molecular spaces. In \cite{Ivashchenko3}, Ivashchenko showed that such transformations do not change the homology groups of the underlying clique complexes.  Since then, the theory has been further explored by several authors \cite{EFH, Frias, Ivashchenko2, GG, ZWZ}, and in \cite{CYY} Chen, Yau, and Yeh \cite{CYY} showed that $\II$-contractible transformations on a graph do not change the simple homotopy type of the underlying clique complexes.

On the other hand, in \cite{BFJ} Boulet, Fieux, and Jouve introduced a notion of \emph{$s$-homotopy} for graphs as a way to study simple homotopy of flag complexes. Their construction was inspired by the \emph{weak beat points} of finite spaces introduced by Barmak and Minian in \cite{BM}. Boulet et al. showed that if $G$ and $H$ are $s$-homotopy equivalent then $G$ and $H$ are $\II$-homotopy equivalent. They asked whether the converse holds.

\begin{question}[Section 5 of \cite{BFJ}] \label{question BFJ}
Let $G$ be a finite simple graph. Is it true that the $s$-homotopy type of $G$ and the $\II$-homotopy type of $G$ agree?
\end{question}

The first contribution of the present paper is to give an affirmative answer to this question. In fact we establish the following list of equivalences.

\begin{theorem} \label{thm:main}
Suppose $G$ and $H$ are finite graphs.  Then the following are equivalent.

\begin{enumerate}
    \item The graphs $G$ and $H$ are $\II$-homotopy equivalent;
    \item The graphs $G$ and $H$ are $s$-homotopy equivalent;
    \item The clique complexes $C(G)$ and $C(H)$ of $G$ and $H$ are simple homotopy equivalent.
\end{enumerate}
\end{theorem}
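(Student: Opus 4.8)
The plan is to prove the three-way equivalence by establishing a cycle of implications, taking advantage of the fact that two of the implications are already supplied by prior work. Boulet, Fieux, and Jouve proved that $(2) \Rightarrow (1)$, and Chen, Yau, and Yeh proved that $\II$-contractible transformations preserve the simple homotopy type of the clique complex, which gives $(1) \Rightarrow (3)$. Thus the entire theorem reduces to supplying a single new implication, and the most natural choice is $(3) \Rightarrow (2)$: if $C(G)$ and $C(H)$ are simple homotopy equivalent, then $G$ and $H$ are $s$-homotopy equivalent. Closing the cycle this way is cleaner than proving $(3) \Rightarrow (1)$ directly, because $s$-homotopy was expressly designed by Boulet et al.\ to capture simple homotopy of flag complexes, so the combinatorial operations on the graph side should be in tighter correspondence with elementary collapses on the complex side.

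To prove $(3) \Rightarrow (2)$, the first step I would take is to recall the precise definition of $s$-homotopy: it is generated by the addition and deletion of \emph{dominated} vertices, i.e.\ vertices $v$ whose open neighborhood $N(v)$ (equivalently, the link $\Link_{C(G)}(v)$) is a cone, or more generally $s$-dismantlable in the sense of \cite{BFJ}. The key observation is that deleting such a dominated vertex from $G$ corresponds exactly to an elementary collapse (or a sequence thereof) of $C(G)$, since the star of $v$ collapses onto its link when the link is collapsible. The heart of the argument is then a translation principle: I would show that any elementary collapse or expansion of the flag complex $C(G)$ can be realized, up to $s$-homotopy, by a sequence of vertex additions and deletions on the graph. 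The subtlety is that a simple homotopy equivalence between $C(G)$ and $C(H)$ is a formal deformation through a sequence of complexes that need \emph{not} be flag at the intermediate stages, so one cannot naively read off graph operations at each step.

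The main obstacle, then, is precisely this flag-preservation problem: I must show that a formal deformation between two flag complexes can be replaced by one that stays within the world of flag complexes (equivalently, within the world of graphs), so that every intermediate step corresponds to a genuine $s$-homotopy move. The standard tool for controlling elementary collapses is the theory of finite topological spaces and their order complexes, following Barmak and Minian \cite{BM}; since $s$-homotopy was itself modeled on weak beat points, I expect the cleanest route is to pass to the face poset, apply the finite-space machinery to replace the deformation by a sequence of \emph{beat point} and \emph{weak point} removals, and then verify that each such removal descends to a dominated-vertex operation on the graph. Concretely, I would argue that an elementary collapse of $C(G)$ along a free face $\sigma \subset \tau$ can be factored through barycentric-type subdivisions or through the addition of an auxiliary vertex whose link is collapsible, thereby expressing the collapse as a composite of $s$-homotopy moves.

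Finally, I would assemble the pieces: the prior results give $(2) \Rightarrow (1) \Rightarrow (3)$, and the translation principle gives $(3) \Rightarrow (2)$, so all three conditions are equivalent and in particular Question~\ref{question BFJ} is answered affirmatively. I expect the genuinely hard step to be the flag-preservation lemma, and I anticipate that the bulk of the paper's technical content, very likely a separate lemma characterizing when a vertex deletion induces a collapse of the clique complex, will be devoted to establishing exactly that correspondence between dominated vertices and collapsible links.
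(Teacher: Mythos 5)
Your logical skeleton---closing the cycle $(2) \Rightarrow (1) \Rightarrow (3) \Rightarrow (2)$, with the first two implications supplied by Proposition \ref{proposition BFJ} and Theorem \ref{theorem CYY}---is exactly the structure of the paper's proof. The problem is the third implication. You treat $(3) \Rightarrow (2)$ as the new contribution to be proved, and you propose a ``translation principle'' converting elementary collapses and expansions of $C(G)$ into $s$-homotopy moves, correctly identifying the flag-preservation issue (intermediate complexes in a formal deformation need not be flag) as the central obstacle. But your proposal never actually overcomes that obstacle: at precisely the points where all the technical content lies, the text says ``I would show\dots'' and ``I expect\dots''. Replacing an arbitrary formal deformation between flag complexes by one whose every intermediate stage is flag is a substantial theorem, not a routine verification, so as written the argument has a genuine gap at its heart.

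What you missed is that this step requires no new proof at all: the equivalence $(2) \Leftrightarrow (3)$---in particular the implication $(3) \Rightarrow (2)$ that you set out to prove---is precisely Theorem 2.10 of \cite{BFJ}, quoted in the paper as Theorem \ref{theorem BFJ}. Boulet, Fieux, and Jouve already carried out essentially the program you sketch (their proof uses a cylinder-type construction and the finite-space ideas of Barmak and Minian \cite{BM}); what they left open was only whether $\II$-homotopy conversely implies $s$-homotopy, and that is what the cycle of citations settles. The paper's entire proof is three citations: $(2) \Leftrightarrow (3)$ from Theorem \ref{theorem BFJ}, $(1) \Rightarrow (3)$ from Theorem \ref{theorem CYY}, and $(2) \Rightarrow (1)$ from Proposition \ref{proposition BFJ}. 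If you replace your unproven translation principle by the citation of Theorem \ref{theorem BFJ}, your argument becomes the paper's argument; without that citation, the hardest part of the theorem remains unproved.
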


In \cite{DEFH}, the first author, Espinoza, Fr\'ias-Armenta, and Hern\'andez posed a related question.

\begin{question}[Question 28 of \cite{DEFH}]
Does there exist a finite graph $G$ that is not $\II$-contractible, but such that $C(G)$ is contractible?
\end{question}

The second main result in this note is to give a negative answer to this question.

\begin{theorem} \label{thm:other}
Suppose $G$ is a simple graph such that $C(G)$ is contractible. Then $G$ is $\II$-contractible. 
\end{theorem}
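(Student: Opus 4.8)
The plan is to deduce the result almost immediately from the already-established Theorem~\ref{thm:main}, once it is combined with one standard input from simple homotopy theory. Recall that a finite graph is $\II$-contractible exactly when it is $\II$-homotopy equivalent to the one-vertex graph $\pt$, whose clique complex $C(\pt)$ is a single point. Thus, by the equivalence of conditions (1) and (3) in Theorem~\ref{thm:main} applied with $H = \pt$, it suffices to prove that $C(G)$ is \emph{simple} homotopy equivalent to a point. Note that contractibility of $C(G)$ already forces $G$ to be nonempty and connected, so there is no degenerate case to worry about.

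The crux is therefore to upgrade the hypothesis that $C(G)$ is contractible (i.e., homotopy equivalent to a point) to the stronger statement that it is simple homotopy equivalent to a point. This is where the only genuinely topological ingredient enters: since $C(G)$ is contractible, its fundamental group is trivial, and hence the Whitehead torsion of any homotopy equivalence $C(G) \to \pt$ lives in the Whitehead group $\mathrm{Wh}(\{1\})$ of the trivial group. As $\mathrm{Wh}(\{1\}) = 0$, this torsion vanishes, so the homotopy equivalence is automatically simple (see \cite{Cohen}). In other words, every contractible finite CW complex is simple homotopy equivalent to a point.

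Putting the pieces together, $C(G)$ is simple homotopy equivalent to $C(\pt)$, so Theorem~\ref{thm:main} yields that $G$ and $\pt$ are $\II$-homotopy equivalent, which is precisely the assertion that $G$ is $\II$-contractible. I expect the main (indeed essentially the only) conceptual step to be the appeal to the vanishing of $\mathrm{Wh}(\{1\})$; everything else is a formal bookkeeping application of Theorem~\ref{thm:main}. A more self-contained alternative would be to argue combinatorially that a contractible clique complex can be reduced to a point purely by collapses and expansions, but routing the argument through Whitehead torsion keeps the topological content isolated and transparent.
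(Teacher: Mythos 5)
Your proposal is correct and matches the paper's proof: the paper likewise applies Theorem~\ref{thm:main} after upgrading contractibility of $C(G)$ to simple homotopy equivalence with a point via Theorem~\ref{theorem simple homotopy}, which is exactly the vanishing-of-Whitehead-torsion argument you spell out. The only cosmetic difference is that you inline the torsion argument rather than citing the packaged statement from the preliminaries.
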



Our proofs of Theorems \ref{thm:main} and \ref{thm:other} are quite straightforward, and follow from a combination of known results from the literature. It seems that the connection among various approaches to graph homotopy is not well known even to experts in the field, and one of our goals in this work is to have this spelled out more clearly.

It is known that a finite simplicial complex is contractible if and only if it is simple homotopy equivalent to a point (see Theorem \ref{theorem simple homotopy} for the more general statement). From this we obtain the following corollary.


\begin{corollary} \label{main corollary 1}
The class $\II$ coincides with the class of finite graphs whose clique complexes are contractible.
\end{corollary}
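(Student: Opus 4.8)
The plan is to derive Corollary \ref{main corollary 1} as a direct consequence of Theorem \ref{thm:other} together with the cited fact that contractibility coincides with simple homotopy equivalence to a point. The statement to prove is a set equality: the class $\II$ of $\II$-contractible graphs equals the class of finite graphs whose clique complexes are contractible. I would prove this by establishing the two inclusions separately.

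First I would show that if $G$ is $\II$-contractible then $C(G)$ is contractible. By definition, $G$ being $\II$-contractible means that $G$ is $\II$-homotopy equivalent to the one-vertex graph (the single point $\pt$). Applying the implication (1)$\Rightarrow$(3) of Theorem \ref{thm:main}, this yields that $C(G)$ and $C(\pt)$ are simple homotopy equivalent. Since $C(\pt)$ is a single point, $C(G)$ is simple homotopy equivalent to a point, and hence contractible (a simple homotopy equivalence is in particular a homotopy equivalence, so a complex simple homotopy equivalent to a point is contractible). This direction is essentially immediate once Theorem \ref{thm:main} is in hand.

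For the reverse inclusion, I would invoke Theorem \ref{thm:other} directly: if $G$ is a finite graph with $C(G)$ contractible, then $G$ is $\II$-contractible, so $G$ lies in the class $\II$. This is precisely the content of Theorem \ref{thm:other}, so no additional work is required beyond quoting it. Combining the two inclusions gives the claimed equality of classes.

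I do not anticipate any genuine obstacle here, since the corollary is a formal consequence of the two main theorems and the standard characterization of contractibility. The only point requiring a small amount of care is the first inclusion: one should confirm that $\II$-contractibility of $G$ is indeed formulated as $\II$-homotopy equivalence to the point, so that Theorem \ref{thm:main} applies cleanly, and that simple homotopy equivalence to a point implies genuine contractibility (which is the content of the cited Theorem \ref{theorem simple homotopy}). Both are standard, so the proof amounts to assembling the stated results.
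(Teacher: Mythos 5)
Your proof is correct and follows essentially the same route as the paper: the forward inclusion via Theorem \ref{thm:main} (implication $(1) \Rightarrow (3)$ applied to $G$ and $K_1$, using that simple homotopy equivalence implies homotopy equivalence), and the reverse inclusion by directly quoting Theorem \ref{thm:other}. One tiny attribution slip: Theorem \ref{theorem simple homotopy} is needed for the nontrivial converse (contractible $\Rightarrow$ simple homotopy equivalent to a point, used inside Theorem \ref{thm:other}), not for the trivial direction you cite it for, but this does not affect the argument.
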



Since graphs and flag simplicial complexes are in one-to-one correspondence, we can translate notions between the two. From this perspective, Corollary 1.5, when expressed in the language of flag complexes, implies that $\II$-contractible transformations correspond to operations of adding or deleting vertices or edges with contractible links (see Definition \ref{definition contractible transformation}).

In \cite{CYY}, it was shown that the operations of edge addition and deletion ($(\II3)$ and $(\II4)$ in Definition \ref{definition contractible transformation}) can in fact each be realized as a combination of the vertex additions and deletions ($(\II1)$ and $(\II2)$ in Definition \ref{definition contractible transformation}). Hence Corollary \ref{main corollary 1} implies that two flag complexes are simple homotopy equivalent if and only if one is obtained from the other by adding or removing vertices with contractible links (Corollary \ref{main corollary 3}). It is natural to ask whether a similar assertion holds for general simplicial complexes. Utilizing ideas from \cite{BFJ}, we are able to show that this is indeed true.

\begin{theorem} \label{thm:simplehom}
Two simplicial complexes $K$ and $L$ are simple homotopy equivalent if and only if there exists a sequence
\[ K = X_0, X_1, \cdots, X_k = L \]
of simplicial complexes such that $X_i$ is obtained from $X_{i-1}$ by adding or deleting a vertex that has a contractible link.
\end{theorem}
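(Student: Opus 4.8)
The plan is to prove both implications, treating the prescribed operation---adjoining or deleting a vertex with contractible link---as the atomic move, and calling a finite sequence of such moves an \emph{admissible deformation}. For the easy direction (an admissible deformation implies simple homotopy equivalence) I would reduce to a single atomic move. Suppose $X' = X \cup (v * \ell)$ is obtained by adjoining a vertex $v$ whose link $\ell := \Link(v, X')$ is contractible; it suffices to show $X \hookrightarrow X'$ is a simple homotopy equivalence. Writing $X' = X \cup B$ with $B = v * \ell$ the closed star and $X \cap B = \ell$, the inclusion $\ell \hookrightarrow B$ is a homotopy equivalence between contractible finite complexes ($B$ is a cone, and $\ell$ is contractible by hypothesis), hence a \emph{simple} homotopy equivalence since the relevant Whitehead group is trivial; this is precisely the content of Theorem~\ref{theorem simple homotopy}. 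The gluing (sum) theorem for Whitehead torsion (see \cite{Cohen}) then promotes the cobase change $X \hookrightarrow X' = X \cup_\ell B$ to a simple homotopy equivalence, and composing over the sequence gives the claim.

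For the converse, recall that two complexes are simple homotopy equivalent exactly when they are joined by a finite sequence of elementary collapses and expansions, so it suffices to realize a single elementary collapse $X \searrow Y$ by an admissible deformation (expansions follow by reversing moves). Let the collapse remove a free pair $(\sigma, \tau)$ with $\tau = \sigma \cup \{u\}$ and $\tau$ maximal. If $\dim \sigma = 0$ then $\sigma$ is a vertex whose only proper coface is the edge $\tau$, so $\Link(\sigma, X)$ is a single point and the collapse is literally the deletion of $\sigma$ with contractible link---an atomic move.

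For $\dim \sigma \ge 1$ I would pass to the barycentric subdivision, where the collapse is realized by exactly two atomic moves. Writing $\hat\sigma, \hat\tau$ for the barycenters, a direct computation in the order complex of the face poset gives $\Link(\hat\sigma, \Bd X) = \hat\tau * \Bd(\partial\sigma)$, a cone over a sphere and hence contractible, so $\hat\sigma$ may be deleted. After this deletion the link of $\hat\tau$ becomes $\Bd(\partial\tau)$ with the open star of $\hat\sigma$ removed---a sphere minus an open ball, again contractible---so $\hat\tau$ may be deleted as well, and one checks the result is exactly $\Bd Y$. Applying this to each step of a collapse/expansion sequence witnessing that $K$ and $L$ are simple homotopy equivalent yields an admissible deformation from $\Bd K$ to $\Bd L$; these are flag complexes, to which Corollary~\ref{main corollary 3} applies directly.

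The remaining step---which I expect to be the \textbf{main obstacle}---is to connect $K$ with its subdivision $\Bd K$ by an admissible deformation, so that the chain $K \rightsquigarrow \Bd K \rightsquigarrow \Bd L \rightsquigarrow L$ closes up. The difficulty is that a single stellar subdivision at a face $\sigma$ introduces a barycenter whose link is $\partial\sigma * \Link(\sigma, X)$, typically a suspension rather than a contractible complex, so it is \emph{not} itself an atomic move; moreover no vertices are deleted by a subdivision, whereas any single atomic move alters only the faces incident to one vertex. My plan is to realize the subdivision indirectly: first adjoin the barycenter $\hat\sigma$ with link the \emph{closed} star $\overline\sigma * \Link(\sigma, X)$ (a cone, hence contractible), and then eliminate the faces still containing $\sigma$ by a further sequence of atomic moves that trade the vertices of $\sigma$ in and out against the newly created cone, climbing in dimension as needed. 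Verifying that all intermediate links remain contractible is the delicate point, and this is exactly where the local analysis underlying \cite{BFJ} is meant to be brought to bear.
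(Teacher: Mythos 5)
The gap is exactly where you predict it, and it is genuine: your proposed construction for $K \sim \Bd(K)$ breaks down at its first deletion, so the chain $K \rightsquigarrow \Bd K \rightsquigarrow \Bd L \rightsquigarrow L$ never closes. After you adjoin the barycenter $\hat\sigma$ over the closed star, the current complex is $X_1 = X \cup \bigl(\hat\sigma * \overline{\sigma} * \Link(\sigma,X)\bigr)$, and every face that still has to be destroyed contains $\sigma$; hence the next move must delete some vertex $v \in \sigma$, whose link in $X_1$ is $\Link(v,X) \cup \bigl(\hat\sigma * \overline{\sigma \setminus \{v\}} * \Link(\sigma,X)\bigr)$, and this complex need not be contractible. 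For instance, let $X = \partial\Delta^3$ and let $\sigma = \{a,b,c\}$ be a facet: then $\Link(a,X_1)$ is the $3$-cycle on $\{b,c,d\}$ with the solid triangle on $\{\hat\sigma,b,c\}$ attached along the edge $\{b,c\}$, which is homotopy equivalent to $S^1$, so no vertex of $\sigma$ can be deleted. Adding the barycenters of the other facets first does not help (the link becomes a $3$-cycle with triangular flaps, still homotopy equivalent to $S^1$); the obstruction only goes away once barycenters of lower-dimensional faces are also in place. This is why the paper abandons any face-by-face stellar scheme and instead builds the simplicial mapping cylinder $\Cyl(K)$ on the vertex set $V(K) \sqcup V(\Bd(K))$, following Proposition 2.9 of \cite{BFJ}: one adjoins \emph{all} barycenters $\hat\alpha$ before deleting anything, in decreasing order of $\dim\alpha$, each move admissible because at that moment the link of $\hat\alpha$ is the join of the simplex $\overline{\alpha}$ with the order complex of $\{\beta \supsetneq \alpha\}$, hence a cone; only after the whole of $\Bd(K)$ is present does one delete the original vertices of $K$, whose links in the intermediate complexes are then contractible (this is the part mimicked from \cite{BFJ}). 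This yields $K \sim \Cyl(K) \sim \Bd(K)$, the lemma your argument is missing.

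The remainder of your argument is correct, and one piece is arguably cleaner than the paper's: your realization of an elementary collapse $X \searrow Y$ at the barycentric level by exactly two admissible deletions (the link of $\hat\sigma$ is the cone $\{\hat\tau\} * \Bd(\partial\sigma)$; after deleting $\hat\sigma$ the link of $\hat\tau$ is $\Bd(\partial\tau)$ minus the open star of $\hat\sigma$, a ball; and what remains is exactly $\Bd Y$) gives $\Bd K \sim \Bd L$ directly, without routing through Corollary~\ref{main corollary 3} and hence without the graph-theoretic machinery behind Theorem~\ref{thm:main}, which the paper does invoke at this point. The easy direction via the sum theorem for Whitehead torsion also matches the paper's citation of \cite{Cohen} in substance. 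But with $K \sim \Bd(K)$ unproved, the theorem is not established.
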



The rest of this paper is organized as follows. We review the necessary background in Section \ref{sec:prelim} and provide the proofs of Theorems \ref{thm:main}, \ref{thm:other}, and \ref{thm:simplehom} in Section \ref{sec:proof}.

\section{Preliminaries}\label{sec:prelim}
We begin with a review of the relevant definitions and prior work needed to state and prove our results. We assume the reader is familiar with the basics of graphs and simplicial complexes. For a comprehensive introduction to the combinatorial topology of simplicial complexes, we refer the reader to \cite{Koz}.

\subsection{Simple homotopy}
Suppose $X$ is a simplicial complex and $\tau \subset \sigma$ are faces such that $\sigma$ is a facet of $X$, and no other facet of $X$ contains $\tau$. Then $\tau$ is a \emph{free face} of $X$ and a \emph{simplicial collapse} of $X$ is the removal of all simplices $\alpha$ satisfying $\tau \subset \alpha \subset \sigma$. If $\dim \tau = \dim \sigma - 1$ then we call this removal an \emph{elementary collapse}. An \emph{elementary expansion} is the inverse of an elementary collapse. 

\begin{definition}
Two simplicial complex $X$ and $Y$ are \emph{simple homotopy equivalent} if there exists a sequence
\[ X = X_0, \cdots, X_n = Y\]
such that $X_i$ is an elementary collapse or an elementary expansion of $X_{i-1}$.
\end{definition}


A homotopy equivalence $f:X \rightarrow Y$ is a \emph{simple homotopy equivalence} if it is homotopic to a sequence of collapses and expansions.   An important resut in the area says that a homotopy equivalence $f:X \rightarrow Y$ between finite complexes is a simple homotopy equivalence if and only if the \emph{Whitehead torsion} $\tau(f)$ of $f$ vanishes \cite[22.2]{Cohen}. If $\pi_1(X) \cong \pi_1(Y)$ is trivial it is not hard to see that $\tau(f)=0$. As a consequence we get the following result.

\begin{theorem}\label{theorem simple homotopy}
Let $X$ and $Y$ be simply connected finite simplicial complexes. If $X$ and $Y$ are homotopy equivalent, then $X$ and $Y$ are simple homotopy equivalent.
\end{theorem}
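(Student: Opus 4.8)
The plan is to reduce everything to the Whitehead torsion criterion recalled just above the statement, namely that a homotopy equivalence $f \colon X \to Y$ of finite complexes is a simple homotopy equivalence precisely when its torsion $\tau(f)$ vanishes \cite[22.2]{Cohen}. Since $X$ and $Y$ are assumed homotopy equivalent, I would begin by fixing a homotopy equivalence $f \colon X \to Y$ and recalling that its Whitehead torsion is, by definition, an element $\tau(f) \in \mathrm{Wh}(\pi_1(Y))$, the Whitehead group of the fundamental group of $Y$.

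The key step is then to observe that this target group is trivial. Recall that for a group $\pi$ one sets $\mathrm{Wh}(\pi) = K_1(\mathbb{Z}[\pi]) / \langle \pm \pi \rangle$. When $\pi$ is the trivial group, the group ring is simply $\mathbb{Z}[\pi] = \mathbb{Z}$, and since $\mathbb{Z}$ is a Euclidean domain one has $K_1(\mathbb{Z}) \cong \mathbb{Z}^\times = \{\pm 1\}$; after dividing out by the subgroup $\langle \pm 1 \rangle$ generated by the trivial units, we are left with $\mathrm{Wh}(1) = 0$. Because $Y$ is simply connected we have $\pi_1(Y) = 1$, so $\tau(f)$ lives in the trivial group and must vanish. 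Having established $\tau(f) = 0$, I would conclude directly from \cite[22.2]{Cohen} that $f$ is a simple homotopy equivalence, and hence that $X$ and $Y$ are simple homotopy equivalent, as desired.

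As for obstacles, there is essentially no hard step here: the argument is entirely formal once one accepts the Whitehead torsion machinery. The only computation of substance is the identification $\mathrm{Wh}(1) = 0$, which rests on the standard fact that $K_1$ of a principal ideal domain reduces to its group of units; one could equally cite this rather than reprove it. The genuinely delicate foundational input—that $\tau(f)$ is well defined and that its vanishing detects the simple homotopy type—is exactly the content of the cited theorem \cite[22.2]{Cohen}, which we are free to assume. Thus the entire proof amounts to unwinding the definition of the Whitehead group at the trivial group and invoking the cited criterion.
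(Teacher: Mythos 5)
Your proposal is correct and follows essentially the same route as the paper: the paper likewise deduces the theorem from the torsion criterion \cite[22.2]{Cohen}, remarking that $\tau(f)=0$ when the fundamental groups are trivial. Your added computation that $\mathrm{Wh}(1)=0$ via $K_1(\mathbb{Z})\cong\{\pm 1\}$ simply fills in the detail the paper leaves as ``not hard to see.''
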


\subsection{$s$-homotopy}
We next recall the notion of $s$-homotopy of graphs as developed in \cite{BFJ}. Here we work with finite simple graphs consisting of a vertex set $V = V(G)$ and edge set $E=E(G)$, and we do not allow loops nor multiple edges. For a graph $G$ and vertex $v \in V(G)$ we use $N_G(v)$ to denote the \emph{(open) neighborhood} of $v$, the set of vertices in $G$ that are adjacent to $v$. We let $N_G(v,w) := N_G(v) \cap N_G(w)$ and $N_G[v] := N_G(v) \cup \{v\}$ for vertices $v$ and $w$ in $G$.  For a subset $S \subset V(G)$ of vertices in $G$, we let $G[S]$ denote the subgraph of $G$ induced by $S$. For a vertex $v$ of $G$, we write $G - v$ to indicate $G[V(G) - \{ v \}]$. For an edge $e$ of $G$, we write $G \setminus e$ to mean the subgraph $V(G \setminus e) = V(G)$ and $E(G \setminus e) = E(G) - \{ e\}$.

A vertex $v \in V(G)$ is \emph{dismantlable} if there exists a vertex $w \neq v$ such that $N_G[v] \subset N_G[w]$. A graph $G$ is said to be \emph{dismantlable} if it can be reduced to a single vertex by removing dismantlable vertices. Dismantlable graphs have applications in many areas of combinatorics including pursuit-evasion games, homomorphism reconfiguration, and statistical physics. Motivated by constructions of \cite{BM} Boulet, Fieux, and Jouve defined the following in \cite{BFJ}.

\begin{definition}
A vertex $v$ of a graph $G$ is \emph{$s$-dismantlable} if $G[N_G(v)]$ is dismantlable.
\end{definition}

If $v$ is an $s$-dismantlable vertex of $G$, we call $G - v$ an \emph{$s$-collapse of $G$}. A graph $H$ is an \emph{$s$-expansion of $G$} if $G$ is an $s$-collapse of $H$.


\begin{definition}
Two graphs $G$ and $H$ are \emph{$s$-homotopy equivalent} if there exists a sequence
\[ G = G_0, \cdots, G_n = H\]
such that $G_i$ is obtained from $G_{i-1}$ by adding or deleting an $s$-dismantlable vertex.
\end{definition}

Boulet, Fieux, and Jouve showed that $s$-homotopy characterizes simple homotopy of flag simplicial complexes as follows.

\begin{theorem} \cite[Theorem 2.10]{BFJ} \label{theorem BFJ}
Suppose $G$ and $H$ are finite graphs. Then $G$ and $H$ are $s$-homotopy equivalent if and only if the clique complexes $C(G)$ and $C(H)$ are simple homotopy equivalent.
\end{theorem}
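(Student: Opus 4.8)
The plan is to prove the two implications separately, and the reverse implication is the substantive new input. It suffices to treat a single operation, i.e.\ to show that if $v$ is a vertex of a finite complex $X$ with $\lk_X(v)$ contractible, then the inclusion $X-v\hookrightarrow X$ is a simple homotopy equivalence. I would write $X=(X-v)\cup\st_X(v)$ with $(X-v)\cap\st_X(v)=\lk_X(v)$, and note that $\st_X(v)=v\ast\lk_X(v)$ is a cone, hence contractible. The inclusion $\lk_X(v)\hookrightarrow\st_X(v)$ is then a map between contractible finite complexes, so it is a homotopy equivalence; since both complexes are simply connected, Theorem~\ref{theorem simple homotopy} (equivalently, the vanishing of the Whitehead group of the trivial group) upgrades it to a \emph{simple} homotopy equivalence. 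Applying the sum (excision) formula for Whitehead torsion \cite{Cohen} to the decomposition $X=(X-v)\cup_{\lk_X(v)}\st_X(v)$, the torsion of $X-v\hookrightarrow X$ is the image of the vanishing torsion of $\lk_X(v)\hookrightarrow\st_X(v)$, so $X-v\hookrightarrow X$ is a simple homotopy equivalence. (As an elementary alternative one records that a \emph{collapsible} link already yields an honest collapse $X\searrow X-v$, obtained by matching the faces containing $v$ along a collapsing sequence of $\lk_X(v)$.)

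For the forward implication, since simple homotopy equivalence is generated by elementary collapses and their inverses, it is enough to realize one elementary collapse by contractible-link vertex moves; writing $\approx$ for connectedness by such moves, I must show $X\approx X'$ whenever $X\searrow X'$ removes a free pair $(\tau,\sigma)$ with $\sigma=\tau\cup\{x_0\}$. The key observation is that the free-face condition says exactly that $\lk_X(\tau)=\{x_0\}$. More generally I would establish the statement $P(d)$: if $\rho$ is a face with $\dim\rho=d$ and $\lk_X(\rho)$ contractible, then $X\approx X\setminus\st^\circ_X(\rho)$, where $\st^\circ_X(\rho)$ is the set of faces containing $\rho$; the collapse is the case $d=\dim\tau$, $\lk_X(\tau)=\{x_0\}$. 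The base case $P(0)$ is immediate, since removing the open star of a vertex is deleting that vertex. The crucial device is the barycenter of $\rho$: the stellar subdivision $\operatorname{stellar}_\rho(X)$ introduces a vertex $\hat\rho$ whose link is $\partial\rho\ast\lk_X(\rho)$, and because $\lk_X(\rho)$ is contractible this join is homotopy equivalent to $\partial\rho\ast\mathrm{pt}$, a cone, hence contractible; moreover deleting $\hat\rho$ from $\operatorname{stellar}_\rho(X)$ returns precisely $X\setminus\st^\circ_X(\rho)$. Thus $X\setminus\st^\circ_X(\rho)\approx\operatorname{stellar}_\rho(X)$ by one move, and $P(d)$ is reduced to the comparison $X\approx\operatorname{stellar}_\rho(X)$.

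To lower the dimension I would descend into a link, which is where the ideas of \cite{BFJ} enter. Choosing $r\in\rho$ and setting $\rho'=\rho\setminus\{r\}$, one has $\lk_{\lk_X(r)}(\rho')=\lk_X(\rho)$, so $P(d-1)$ applied inside $A=\lk_X(r)$ connects $A$ with $A\setminus\st^\circ_A(\rho')$ by contractible-link moves. Each such move transports back to $X$ by coning with $r$: adding (resp.\ deleting) a vertex $w$ with contractible link $\Lambda_w$ in $A$ corresponds to adding (resp.\ deleting) $w$ in $X$ with link $r\ast\Lambda_w$, again a cone and hence contractible, so the lift is itself a legitimate contractible-link move. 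The net effect on $X$ is to remove exactly the faces containing both $r$ and $\rho'$, i.e.\ the faces containing $\rho=\rho'\cup\{r\}$, which is $\st^\circ_X(\rho)$, giving $P(d)$.

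The main obstacle I anticipate is the bottom of this induction, namely removing the open star of an \emph{edge} with contractible link: here the descent reintroduces the same phenomenon one dimension up, so it is not dispatched by the recursion and requires a dedicated gadget. I expect to resolve it by introducing a single auxiliary vertex along the edge and reducing its link to a point using the already-established characterization for flag complexes (Corollary~\ref{main corollary 3}), the point being that all auxiliary links occurring in the construction are genuinely contractible. Equivalently, the entire forward implication can be organized around the single lemma $K\approx\Bd K$: once this is known, $K\sim L$ gives $\Bd K\sim\Bd L$, Corollary~\ref{main corollary 3} connects the flag complexes $\Bd K$ and $\Bd L$ by contractible-link moves, and $K\approx\Bd K\approx\Bd L\approx L$. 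Either way, the delicate part is the careful bookkeeping ensuring that each added or deleted vertex has a contractible link and that the faces introduced and removed close up exactly to the intended subdivision.
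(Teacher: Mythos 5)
There are genuine gaps here, and the most serious one is circularity. Note first that the paper offers no proof of Theorem \ref{theorem BFJ}: it is imported verbatim from \cite[Theorem 2.10]{BFJ} and then fed into Theorem \ref{thm:main}, from which Corollaries \ref{main corollary 2} and \ref{main corollary 3} are deduced. Your hard direction leans on Corollary \ref{main corollary 3} twice --- as the ``dedicated gadget'' for the edge case, and again in the alternative organization $K \approx \Bd K$, $\Bd K \approx \Bd L$ --- but that corollary sits strictly downstream of the statement being proved, so neither version of the argument closes. (Your alternative organization is in fact essentially the paper's proof of Theorem \ref{thm:simplehom}, where the appeal to Corollary \ref{main corollary 3} is legitimate precisely because Theorem \ref{theorem BFJ} has already been granted.) Second, even setting circularity aside, you are proving the wrong statement: your relation $\approx$ allows adding or deleting vertices with \emph{contractible} links, whereas $s$-homotopy demands that the link graph $G[N_G(v)]$ be \emph{dismantlable}. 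Dismantlability is strictly stronger pointwise: dismantlable graphs have collapsible clique complexes, while there are graphs $G$ whose clique complex triangulates the dunce hat (barycentric subdivisions are flag), so that $C(G)$ is contractible yet $G$ is not dismantlable, since no triangulation of the dunce hat has a free face. Hence a contractible-link move need not be an $s$-move, your intermediate complexes (stellar subdivisions) need not even be flag, and converting a $\approx$-chain into an $s$-homotopy of graphs --- which is exactly the nontrivial content of \cite[Theorem 2.10]{BFJ} --- is nowhere addressed. (Your easy direction likewise uses, without stating it, the true fact that dismantlable link graphs yield contractible links.)

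Independently of all this, the inductive transport step fails. The claim that deleting a vertex $w$ with link $\Lambda_w$ inside $A = \lk_X(r)$ ``corresponds to deleting $w$ in $X$ with link $r \ast \Lambda_w$'' is false whenever $w$ is a pre-existing vertex of $X$: such a $w$ generally lies in many faces of $X$ avoiding $r$, so $\lk_X(w) \neq r \ast \Lambda_w$, and the operation you actually need --- removing only the faces containing both $r$ and $w$ --- is the removal of an open edge star, not a vertex move. You acknowledge this phenomenon at the bottom of the recursion, but it infects every level (the chain produced by $P(d-1)$ inside $A$ deletes original vertices of $A$), and both proposed repairs are the circular appeals above. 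What does hold up is your first paragraph: the torsion sum-formula argument (or simply (20.1) or (5.9) of \cite{Cohen}, which is what the paper invokes in its proof of Theorem \ref{thm:simplehom}) correctly shows that contractible-link vertex moves preserve simple homotopy type. But for Theorem \ref{theorem BFJ} itself, a self-contained proof must engage the combinatorial notion of dismantlability as \cite{BFJ} does --- their argument runs through $s$-dismantlability for graphs of barycentric subdivisions --- or else one must simply cite \cite{BFJ}, as the paper does.
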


\subsection{$\II$-contractible transformations}
Motivated by operations in the theory of molecular spaces and digital topology,  Ivashchenko \cite{Ivashchenko1} introduced notions of {\it $\II$-contractible transformations} that can be performed on a graph. To describe these transformations we first define the class of $\II$-contractible graphs. 


\begin{definition} \label{definition class I}
The class of \emph{$\II$-contractible graphs}, denoted by $\II$, is the smallest class of finite graphs that contains $K_1$ (a single vertex), and is closed under the following operations.
\begin{enumerate}

\item If $G \in \II$ and $G[N_G(v)] \in \II$ for some vertex $v \in V(G)$, then $G - v \in \II$.

\item If $G$ is a graph such that $G - v \in \II$ and $G[N_G(v)] \in \II$ for some $v \in V(G)$, then $G \in \II$.

\item If $G \in \II$ and $G[N_G(v, w)] \in \II$ for some edge $e = \{v,w\} \in E(G)$, then $G \setminus e \in \II$.

\item  If $G$ is a graph such that $G \setminus e \in \II$ and $N_G(v, w) \in \II$ and for some $e = \{v,w\} \in E(G)$, then $G \in \II$.
\end{enumerate}

\end{definition}

\begin{definition} \label{definition contractible transformation}
An \emph{$\II$-contractible transformation} on a graph $G$ is any of the following.
\begin{enumerate}[$(\II 1)$]
\item Deleting a vertex: If $v$ is a vertex of $G$ such that $G[N_G(v)] \in \II$, then $v$ can be deleted from $G$.

\item Gluing a vertex: If $S \subset V(G)$ such that $G[S] \in \II$, then one can add a new vertex $v$ to $G$ satisfying $N_G(v) =S$.

\item Deleting an edge: If $e = \{ v, w\}$ is an edge of $G$ satisfying $G[N_G(v, w)] \in \II$, then $e$ can be deleted from $G$.

\item Gluing an edge: Suppose $e = \{ v, w\}$ is a $2$-element subset of $V(G)$ which is not an edge in $G$. If $G[N_G(v, w)] \in \II$ then one can add the edge $\{v,w\}$.
\end{enumerate}
\end{definition}

Allowing sequences of such transformations, we define an equivalence relation on the set of finite graphs. Namely, we say that graphs $G$ and $H$ are \emph{$\II$-homotopy equivalent} if there exists a sequence
\[ G = G_0 , G_1, \cdots, G_n = H\]
such that $G_i$ is obtained from $G_{i-1}$ by one of the operations $(\II 1)$, $(\II 2)$, $(\II 3)$, and $(\II 4)$. The class $\II$ can be seen to coincide with the class of finite simple graphs which are $\II$-homotopy equivalent to a single vertex $K_1$.


In \cite{Ivashchenko3} it was shown that $\II$-contractible transformations on a graph $G$ preserve the homology groups of its clique complex $C(G)$.  Chen, Yau, and Yeh proved that they in fact preserve the \emph{simple homotopy type} of $C(G)$.

\begin{theorem}\cite[Theorem 3.7]{CYY}\label{theorem CYY}
If two graphs $G$ and $H$ are $\II$-homotopy equivalent, then their clique complexes $C(G)$ and $C(H)$ are simple homotopy equivalent.
\end{theorem}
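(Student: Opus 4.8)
The plan is to prove the statement transformation by transformation: I would show that each single $\II$-contractible transformation of Definition \ref{definition contractible transformation} induces a \emph{simple} homotopy equivalence on clique complexes, and then chain these along an $\II$-homotopy. Since $(\II 1)$ and $(\II 2)$ are mutually inverse, as are $(\II 3)$ and $(\II 4)$, and since simple homotopy equivalence is an equivalence relation, it suffices to treat the two deletion operations $(\II 1)$ and $(\II 3)$. The engine for both is a preliminary fact that I would isolate as a lemma: \emph{if $G \in \II$ then $C(G)$ is contractible.}

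I would prove this lemma by structural induction on the construction of $\II$ from Definition \ref{definition class I}. The base case $C(K_1) = \pt$ is clear. For each closure operation the point is that the clique complex sits in a pushout. Writing $L = \Link_{C(G)}(v) = C(G[N_G(v)])$, the closed star of $v$ is the cone $v * L$, and $C(G) = C(G-v) \cup_{L} (v * L)$. The cone $v*L$ is contractible, and by the inductive hypothesis $L$ is contractible, so $L \hookrightarrow v*L$ is a cofibration between contractible complexes and hence a homotopy equivalence; the gluing lemma for homotopy equivalences then makes $C(G-v) \hookrightarrow C(G)$ a homotopy equivalence. Depending on which of operations $(1),(2)$ is being verified, this transfers contractibility to whichever complex is the ``new'' one. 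The edge operations $(3),(4)$ are handled identically using $C(G) = C(G \setminus e) \cup_{S^0 * M} (\bar{e} * M)$, where $\bar e$ is the closed $1$-simplex on $e = \{v,w\}$ and $M = C(G[N_G(v,w)])$; here $\bar e * M$ is a cone and the gluing locus $S^0 * M = \Sigma M$ is a suspension of the contractible complex $M$, so both are again contractible.

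With the lemma available, I would rerun the same pushout decompositions but now track Whitehead torsion rather than homotopy type. For $(\II 1)$ the inclusion of the gluing locus $L \hookrightarrow v*L$ is a homotopy equivalence whose torsion lies in $\mathrm{Wh}(\pi_1(v*L))$; since $v*L$ is a cone it is simply connected, so $\mathrm{Wh}(\pi_1(v*L)) = \mathrm{Wh}(1) = 0$ and this torsion vanishes automatically. By the sum/gluing theorem for Whitehead torsion (the pushout formula in \cite{Cohen}), the torsion of $C(G-v) \hookrightarrow C(G)$ is the image of this local torsion under the map induced by $v*L \to C(G)$, hence is also zero, so the inclusion is a simple homotopy equivalence. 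The edge case $(\II 3)$ is the same argument: the local inclusion $\Sigma M = S^0 * M \hookrightarrow \bar e * M$ has torsion in $\mathrm{Wh}(\pi_1(\bar e * M)) = 0$ because the cone $\bar e * M$ is simply connected, whence $C(G\setminus e) \hookrightarrow C(G)$ is a simple homotopy equivalence.

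The hard part is precisely why one cannot finish by a naive collapse: membership in $\II$ only guarantees that the links $L$ and $M$ are \emph{contractible}, not \emph{collapsible}, and a cone $v*L$ collapses onto $L$ exactly when $L$ is collapsible. I therefore genuinely need Whitehead's machinery, and the main obstacle is invoking the sum/gluing theorem for torsion correctly and identifying the gluing data. The decisive observation is that in each decomposition the attached piece is a cone, hence simply connected, so the locally computed torsion lands in a trivial Whitehead group; this simple connectivity is exactly what upgrades ``$L$ contractible'' to ``the inclusion is simple.'' One may phrase the vanishing of the local torsion through Theorem \ref{theorem simple homotopy} applied to the pair (locus, cone), since both are simply connected and homotopy equivalent, and then feed this zero into the gluing formula.
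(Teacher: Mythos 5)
Your argument is correct, but note first that the paper contains no proof of this statement to compare against: Theorem~\ref{theorem CYY} is imported verbatim from \cite[Theorem 3.7]{CYY}, so the comparison is with the literature and with the mechanism the paper deploys elsewhere. Your two decompositions are exactly right: $C(G) = C(G-v) \cup_{L} (v*L)$ with $L = C(G[N_G(v)])$, and $C(G) = C(G\setminus e) \cup_{S^0 * M} (\bar e * M)$ with $M = C(G[N_G(v,w)])$ (here $M \neq \emptyset$ because every graph in $\II$ is nonempty --- one cannot delete the last vertex since the empty graph is not in $\II$ --- so $S^0*M$ really is a contractible suspension and, being connected, causes no $\pi_1$ trouble). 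Your preliminary lemma that $G \in \II$ implies $C(G)$ contractible is one direction of Corollary~\ref{main corollary 1}, but since the paper derives that corollary \emph{from} Theorem~\ref{thm:main}, which rests on Theorem~\ref{theorem CYY}, citing it here would be circular; your independent structural induction correctly avoids this. The torsion bookkeeping --- $\tau(B,C) \in \mathrm{Wh}(\pi_1(B)) = \mathrm{Wh}(1) = 0$ for the cone $B$, pushed into $\tau(X,A)$ by the sum/excision theorem --- is in substance the same mechanism the paper itself invokes in proving Theorem~\ref{thm:simplehom}, where the step ``adding or deleting a vertex with contractible link preserves simple homotopy type'' is attributed to (20.1) or (5.9) of \cite{Cohen}; your Whitehead-torsion paragraph essentially proves that citation. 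One genuine divergence from \cite{CYY}: there the edge operations $(\II 3)$ and $(\II 4)$ are disposed of by showing they factor as compositions of the vertex operations (a reduction the paper quotes before Corollary~\ref{main corollary 3}), whereas you handle edges directly via the suspension decomposition --- self-contained and arguably cleaner, at the cost of running the torsion argument twice. Your closing diagnosis is also the correct one: membership in $\II$ gives contractible, not collapsible, links, so the naive collapse of $v*L$ onto $L$ is unavailable, and the vanishing of the Whitehead group of the simply connected cone is precisely what is needed.
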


Boulet, Fieux, and Jouve were also able to relate the notion of $\II$-homotopy to $s$-homotopy. 

\begin{proposition}\cite[Proposition 5.1]{BFJ} \label{proposition BFJ}
If two graphs $G$ and $H$ have the same $s$-homotopy type, then they have the same $\II$-homotopy type.
\end{proposition}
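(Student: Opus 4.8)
The plan is to reduce the statement to a single structural lemma about the class $\II$. Since $s$-homotopy equivalence is by definition generated by the addition and deletion of $s$-dismantlable vertices, it suffices to show that each such elementary move is one of the $\II$-contractible transformations $(\II1)$ or $(\II2)$. Recall that deleting an $s$-dismantlable vertex $v$ from $G$ means precisely that $G[N_G(v)]$ is dismantlable, whereas the transformation $(\II1)$ requires $G[N_G(v)] \in \II$; dually, adding a vertex $v$ with $N_G(v) = S$ is an $s$-expansion exactly when $G[S]$ is dismantlable, while $(\II2)$ requires $G[S] \in \II$. Thus everything reduces to the following claim, which I will call the Key Lemma: \emph{every dismantlable graph lies in the class $\II$}.

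First I would prove the Key Lemma by induction on the number of vertices. The base case $K_1 \in \II$ holds by definition. For the inductive step, let $F$ be dismantlable with $|V(F)| > 1$, and choose a dismantlable vertex $v$, so that $N_F[v] \subset N_F[w]$ for some $w \neq v$ and $F - v$ remains dismantlable. Since $F - v$ is dismantlable on fewer vertices, the induction hypothesis gives $F - v \in \II$. The crux is to control the link $F[N_F(v)]$. From $v \in N_F[v] \subset N_F[w]$ together with $v \neq w$ we obtain $w \in N_F(v)$, and for any other $u \in N_F(v)$ the inclusion $N_F[v] \subset N_F[w]$ forces $u \in N_F[w]$, i.e. $u$ is adjacent to $w$. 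Hence $w$ is a universal vertex of $F[N_F(v)]$, so this induced subgraph is a cone and therefore dismantlable; as it omits $v$ it has strictly fewer vertices than $F$, and the induction hypothesis yields $F[N_F(v)] \in \II$.

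With $F - v \in \II$ and $F[N_F(v)] \in \II$ in hand, the closure operation (2) of Definition \ref{definition class I} immediately gives $F \in \II$, completing the induction. Granting the Key Lemma, the proposition follows at once: if $v$ is $s$-dismantlable in $G$ then $G[N_G(v)]$ is dismantlable, hence lies in $\II$, so deleting $v$ is an instance of $(\II1)$; symmetrically, an $s$-expansion adds a vertex whose prescribed neighborhood induces a dismantlable, and therefore $\II$, subgraph, which is exactly $(\II2)$. Applying this translation to each elementary step of an $s$-homotopy between $G$ and $H$ produces a sequence of $\II$-contractible transformations, so $G$ and $H$ are $\II$-homotopy equivalent.

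The main obstacle is the Key Lemma, and within it the verification that the link $F[N_F(v)]$ is itself dismantlable on strictly fewer vertices. The universal-vertex observation is what makes the induction close: it is precisely what licenses the application of the closure operation (2) of Definition \ref{definition class I}, rather than only operation (1), and thereby upgrades the reduction $F \rightsquigarrow F-v$ into a genuine membership statement $F \in \II$.
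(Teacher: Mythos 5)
Your proof is correct: the reduction of each elementary $s$-move to $(\II1)$/$(\II2)$ via the Key Lemma that every dismantlable graph lies in $\II$ is sound, and the universal-vertex (cone) observation does make the induction close, since it hands you both hypotheses needed for closure operation (2) of Definition \ref{definition class I}. Note that the paper itself offers no proof here --- the statement is imported as \cite[Proposition 5.1]{BFJ} --- and your argument is essentially a self-contained reconstruction of the route taken in that reference, so there is nothing to flag.
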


\section{Proofs and consequences}\label{sec:proof}
Our proofs of Theorems \ref{thm:main} and \ref{thm:other} are now straightforward applications of the results discussed above.

\begin{proof}[Proof of Theorem \ref{thm:main}]
The equivalence $(2) \Leftrightarrow (3)$ follows from Theorem \ref{theorem BFJ}, the implication $(1) \Rightarrow (3)$ follows from Theorem \ref{theorem CYY}, and the implication $(2) \Rightarrow (1)$ follows from Proposition \ref{proposition BFJ}. The result follows.
\end{proof}


\begin{proof}[Proof of Theorem \ref{thm:other}]
Suppose $G$ is a finite graph such that $C(G)$ is contractible. Since $C(G)$ is homotopy equivalent to a point, from Theorem \ref{theorem simple homotopy} 
we have that $C(G)$ is simple homotopy equivalent to a point. Hence Theorem \ref{thm:main} implies that $G$ is $\II$-contractible.
\end{proof}

\subsection{Other applications}

 In \cite{CYY} Chen, Yau, and Yeh showed that there is redundancy in Definition \ref{definition contractible transformation} of $\II$-contractible transformations. Namely, they showed that the operations $(\II 3)$ and $(\II 4)$ can be obtained from a combination of the operations $(\II 1)$ and $(\II 2)$. Combining this with our results then implies the following.

\begin{corollary} \label{main corollary 2}
Suppose $G$ and $H$ are finite graphs. Then $C(G)$ and $C(H)$ are simple homotopy equivalent if and only if there exists a sequence
\[ G = G_0, \cdots, G_n = H\]
of graphs such that $G_i$ is obtained from $G_{i-1}$ by adding or deleting a vertex $v$ such that $C(G_i[N(v)])$ is contractible.
\end{corollary}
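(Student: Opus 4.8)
The plan is to deduce this corollary formally from three results already established: the equivalence between simple homotopy of clique complexes and $\II$-homotopy of graphs (Theorem \ref{thm:main}), the fact from \cite{CYY} that the edge operations $(\II 3)$ and $(\II 4)$ can each be expressed as a sequence of the vertex operations $(\II 1)$ and $(\II 2)$, and the identification of $\II$ with the class of graphs having contractible clique complexes (Corollary \ref{main corollary 1}). Both directions then amount to chaining these equivalences together; the only delicate point is the bookkeeping of the graph in which the neighborhood $N(v)$ is to be computed.

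For the forward direction, suppose $C(G)$ and $C(H)$ are simple homotopy equivalent. By Theorem \ref{thm:main}, $G$ and $H$ are $\II$-homotopy equivalent, so there is a sequence of transformations $(\II 1)$--$(\II 4)$ carrying $G$ to $H$. Using the reduction of \cite{CYY}, I would first replace every occurrence of $(\II 3)$ and $(\II 4)$ by the corresponding sequence of vertex operations, obtaining a sequence $G = G_0, \dots, G_n = H$ in which each $G_i$ is obtained from $G_{i-1}$ by a vertex deletion $(\II 1)$ or a vertex gluing $(\II 2)$. In either case the definition of the operation guarantees that the induced subgraph on the neighborhood of the vertex $v$ involved lies in $\II$, and Corollary \ref{main corollary 1} rephrases this as the contractibility of $C(G_i[N(v)])$. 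This produces exactly the sequence required by the statement.

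The reverse direction reverses this chain: given a sequence whose steps add or delete a vertex $v$ with $C(G_i[N(v)])$ contractible, Corollary \ref{main corollary 1} shows $G_i[N(v)] \in \II$, so each step is an instance of $(\II 1)$ or $(\II 2)$; hence $G$ and $H$ are $\II$-homotopy equivalent, and Theorem \ref{thm:main} gives that $C(G)$ and $C(H)$ are simple homotopy equivalent. The one subtlety to address, and the only real obstacle, is that when $v$ is \emph{deleted} it is no longer a vertex of $G_i$, so $N(v)$ must be read as the neighborhood of $v$ in $G_{i-1}$; since $v \notin N(v)$, removing $v$ does not alter the induced subgraph on $N(v)$, and $G_i[N(v)] = G_{i-1}[N(v)]$, so the condition is well posed and matches the requirement of operation $(\II 1)$.
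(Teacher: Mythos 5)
Your proposal is correct and follows essentially the same route as the paper, which derives the corollary by combining Theorem \ref{thm:main}, the redundancy result of \cite{CYY} reducing $(\II 3)$ and $(\II 4)$ to vertex operations, and Corollary \ref{main corollary 1}; you simply spell out the chain of implications (and the harmless bookkeeping point that $G_i[N(v)] = G_{i-1}[N(v)]$ when $v$ is deleted) that the paper leaves implicit.
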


We can also translate these statements into the language of flag simplicial complexes. In what follows recall that if $X$ is a simplicial complex and $v \in V(X)$, the link of $v$ is the simplicial complex 
\[{\rm link}_X(v) = \{F \in X ~|~
v \notin F,~~ F \cup \{v\} \in X\}.\]
\noindent
Note that if $X = C(G)$ is a flag simplicial complex then 
${\rm link}_X(v) = C(G[N(v)])$. With this formulation we have the following.

\begin{corollary} \label{main corollary 3}
Suppose $X$ and $Y$ are flag simplicial complexes. Then $X$ and $Y$ are simple homotopy equivalent if and only if there exists a sequence
\[ X = X_0, \cdots, X_n = Y\]
\noindent
of flag complexes such that $X_i$ is obtained from $X_{i-1}$ by adding or deleting a vertex whose link is contractible.
\end{corollary}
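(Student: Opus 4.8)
The plan is to deduce Corollary \ref{main corollary 3} directly from Corollary \ref{main corollary 2} by passing back and forth through the dictionary between finite simple graphs and flag simplicial complexes. This correspondence is a bijection: every flag complex $X$ equals $C(G)$ for its $1$-skeleton $G = G(X)$, and conversely every $C(G)$ is flag. As noted in the excerpt, under this dictionary the link of a vertex is computed by ${\rm link}_{C(G)}(v) = C(G[N_G(v)])$, so the topological condition ``the link of $v$ is contractible'' in the flag complex is literally the condition ``$C(G[N(v)])$ is contractible'' appearing in Corollary \ref{main corollary 2}. The only thing that must be verified is that the two notions of ``add or delete a vertex'' --- one phrased for graphs, one phrased for flag complexes --- coincide under the dictionary.

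Deletion is immediate: removing a vertex $v$ from a flag complex $X = C(G)$ yields the induced subcomplex on $V(X) \setminus \{v\}$, which is again flag and equals $C(G - v)$, while the discarded link is ${\rm link}_X(v) = C(G[N_G(v)])$. The addition step is the one place requiring an argument, and I expect it to be the main (though mild) obstacle. Suppose we add a vertex $v$ to a flag complex $X_{i-1}$ and demand that the result $X_i$ again be flag, letting $S$ denote the set of vertices joined to $v$ by an edge. I would argue that the flag condition forces the link of $v$ to be the \emph{induced} subcomplex $X_{i-1}[S]$: indeed, if $T \subseteq S$ is any face of $X_{i-1}$, then every vertex of $T$ is adjacent to $v$ and the vertices of $T$ are pairwise adjacent, so $T \cup \{v\}$ is a clique in the $1$-skeleton of $X_i$; flagness of $X_i$ then forces $T \cup \{v\} \in X_i$, that is $T \in {\rm link}_{X_i}(v)$. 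The reverse containment is automatic, since any face of the link lies in $X_{i-1} = X_i - v$ and has all its vertices in $S$. Because $X_{i-1}$ is flag, this induced link equals $C(G_{i-1}[S]) = C(G_{i-1}[N(v)])$, exactly the link produced by adjoining $v$ to the graph $G_{i-1} = G(X_{i-1})$ with $N(v) = S$. Thus adding a vertex while staying flag is the same operation as a graph vertex-addition, and the contractibility conditions on the links agree.

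With this identification in hand, the corollary follows by transporting sequences. Writing $X = C(G)$ and $Y = C(H)$, a sequence of graphs $G = G_0, \dots, G_n = H$ as in Corollary \ref{main corollary 2} produces the sequence of flag complexes $X_i = C(G_i)$, in which each $X_i$ is obtained from $X_{i-1}$ by adding or deleting a vertex with contractible link; conversely a sequence of flag complexes as in the statement produces, by taking $1$-skeletons $G_i = G(X_i)$, a sequence of graphs satisfying the hypotheses of Corollary \ref{main corollary 2}. Combining these with Corollary \ref{main corollary 2} --- which asserts that such a graph sequence exists precisely when $C(G)$ and $C(H)$ are simple homotopy equivalent --- yields both directions of Corollary \ref{main corollary 3}. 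The only care needed is to confirm that every intermediate complex remains flag, which is automatic since each $X_i$ is realized as a clique complex $C(G_i)$.
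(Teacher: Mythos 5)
Your proposal is correct and is essentially the paper's own argument: the paper treats Corollary \ref{main corollary 3} as an immediate translation of Corollary \ref{main corollary 2} through the bijection between graphs and flag complexes, justified by the identity ${\rm link}_{C(G)}(v) = C(G[N(v)])$ stated just before the corollary. The only difference is that you explicitly verify the routine point the paper leaves implicit --- that flagness of $X_i$ forces the link of an added vertex to be the induced subcomplex on its neighbor set --- which is a correct and welcome bit of care.
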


As mentioned in the introduction, Corollary \ref{main corollary 3} holds for not necessarily flag simplicial complexes. We end with a proof of this result.


\begin{proof}[Proof of Theorem \ref{thm:simplehom}]
For simplicial complexes $K$ and $L$ we write $K \sim L$ if there exists a sequence described above. If such a sequence exists, then 20.1 or 5.9 in \cite{Cohen} shows that $K$ and $L$ are simple homotopy equivalent. 

For the converse, we first show that $K \sim \Bd(K)$. Here $\Bd(K)$ denotes the \emph{barycentric subdivision} of $K$, the order complex of the face poset of $K$. To see that $K \sim \Bd(K)$ we follow the strategy employed by Boulet et all in the proof of Proposition 2.9 from \cite{BFJ}. Namely, we consider the simplicial complex $\Cyl(K)$ defined as follows. The vertex set of $\Cyl(K)$ is the disjoint union $V(K) \sqcup V(\Bd(K))$, and a subset $\sigma$ of $V(\Cyl(K))$ is a simplex of $\Cyl(K)$ if the following conditions are satisfied:
\begin{enumerate}[(1)]
\item $V(K) \cap \sigma \in K$;

\item $V(\Bd(K)) \cap \sigma \in \Bd(K)$;

\item $\alpha \in \sigma \cap V(\Bd(K))$ implies $\sigma \cap V(K) \subset \alpha$.
\end{enumerate}
Note that $K$ and $\Bd(K)$ are subcomplexes of $\Cyl(K)$. The construction of $\Cyl(K)$ is similar to the construction of the graph $H$ in the proof of Proposition 2.9 in \cite{BFJ}. By mimicking their arguments, we can see that $K \sim \Cyl(K) \sim \Bd(K)$.

Now suppose that $K$ and $L$ are simple homotopy equivalent. We would like to show that $K \sim L$. It is known that $K$ and $\Bd(K)$ are simple homotopy equivalent (see Proposition 6.18 of \cite{Koz}), and hence that $\Bd(K)$ and $\Bd(L)$ are simple homotopy equivalent. Thus Corollary \ref{main corollary 3} implies that $\Bd(K) \sim \Bd(L)$. This shows that $K \sim \Bd(K) \sim \Bd(L) \sim L$, which completes the proof.
\end{proof}







\section*{Acknowledgment}
The first author was partly supported by Simons Foundation Grant $\#964659$. The second author was partly supported by JSPS KAKENHI Grant Numbers JP19K14536 and JP23K12975. We thank Mart\'in Eduardo Fr\'ias Armenta and Shuichi Tsukuda for helpful discussions.


\begin{thebibliography}{99}

\bibitem[BM]{BM}
J. A. Barmak, E. G. Minian, \emph{Simple homotopy types and finite spaces,} Advances in Mathematics. 218 (2008) 87-104

\bibitem[BFJ]{BFJ} R. Boulet, E. Fieux, B. Jouve, \emph{Simplicial simple-homotopy of flag complexes in terms of graphs,} European Journal of Combinatorics, 31 (2010) 161-176.

\bibitem[CYY]{CYY} B. Chen, S.-T. Yau, Y.-N. Yeh, \emph{Graph homotopy and graham homotopy,} Discrete Math., 241 (2001) 153-170.

\bibitem[C]{Cohen} M.M. Cohen, \emph{A course in simple-homotopy theory,} Graduate Texts in Mathematics, volume 10, Springer-Verlag, 1973.


\bibitem[DEFH]{DEFH} A. Dochtermann, J.F. Espinoza, M.E. Fr\'ias-Armenta, H.A. Hern\'andez-Hern\'andez, \emph{Minimal graphs for contractible and dismantlable properties,} Discrete Math. 346 (2023), no. 10, 14 pp.

\bibitem[EFH]{EFH} J.F. Espinoza, M.E. Fr\'ias-Armenta, H.A. Hern\'andez-Hern\'andez, \emph{Collapsibility and homological properties of $\II$-contractible transformations,} Bolet\'in la Sociedad Matem\'atica Mexicana, 28 (2022) article number 42.

\bibitem[F]{Frias} M.E. F\'rias-Armenta, \emph{A counterexample on contractible transformations on graphs}, Discrete Math. 343 (2020), Issue 8, 111915.

\bibitem[GG]{GG} A. Ghosh, S. Ghosh, A contractible graph with eleven vertices and no gluable edge, The Mathematics Student, 90 (2021), pp. 155–159.

\bibitem[I1]{Ivashchenko1} A.V. Ivashchenko, Representation of smooth surfaces by graphs. Transformations of graphs which do not change the Euler characteristic of graphs, Discrete Mathematics, 122 (1993) 219-233.

\bibitem[I2]{Ivashchenko2} A.V. Ivashchenko, Some properties of contractible transformations on graphs, Discrete Mathematics, (1994) 139-145.

\bibitem[I3]{Ivashchenko3} A.V. Ivashchenko, Contractible transformations do not change the homology groups of graphs, Discrete Mathematics, 126 (1994), pp. 159-170.

\bibitem[K]{Koz} D.N. Kozlov, {\it Combinatorial algebraic topology}, Springer, Berlin, Algorithms and Computation in Mathematics, Vol. 21, 2008.

\bibitem[ZWZ]{ZWZ} Zhiguo Zhang, Yanying Wang, Conglei Zhang, \emph{Strong homotopy induced by adjacency structure,} Discrete Math. Volume 346 Issue 1, (2023) 113130.
\end{thebibliography}
\end{document}